\theoremstyle{plain}
\newtheorem{theorem}{Theorem}[section]
\newtheorem{lemma}[theorem]{Lemma}
\newtheorem{corollary}[theorem]{Corollary}
\theoremstyle{remark}
\newtheorem{example}[theorem]{Example}
\newtheorem{remark}[theorem]{Remark}
\newcommand{\R}{\mathbf{R}}
\newcommand{\C}{\mathbf{C}}
\newcommand{\Q}{\mathbf{Q}}
\renewcommand{\P}{\mathbf{P}}
\DeclareMathOperator{\re}{Re}
\DeclareMathOperator{\im}{Im}
\begin{document}
\title[Elimination of parasitic solutions]{Elimination of parasitic solutions in theory of flexible polyhedra}
\author{I.~Kh. Sabitov}
\address{Faculty of Mechanics and Mathematics \\
         Lomonosov Moscow State University \\
         Leninskie Gory, Moscow GSP-1,119234, Russia}
\email{isabitov@mail.ru}
\author{D.~A. Stepanov}
\address{The Department of Mathematical Modelling \\
         Bauman Moscow State Technical University \\
         2-ya Baumanskaya ul. 5, Moscow 105005, Russia}
\email{dstepanov@bmstu.ru}
\date{}

\begin{abstract}
The action of the rotation group $SO(3)$ on systems of $n$ points in the $3$-dimensional Euclidean
space $\mathbf{R}^3$ induces naturally an action of $SO(3)$ on $\mathbf{R}^{3n}$. In the present paper
we consider the following question: do there exist $3$ polynomial functions $f_1$, $f_2$, $f_3$ on
$\mathbf{R}^{3n}$ such that the intersection of the set of common zeros of $f_1$, $f_2$, and
$f_3$ with \emph{each} orbit of $SO(3)$ in $R^{3n}$ is nonempty and finite? Questions of this kind arise 
when one is interested in \emph{relative} motions of a given set of $n$ points, i.~e., when one wants to
exclude the local motions of the system of points as a rigid body. An example is the problem of 
deciding whether a given polyhedron is non-trivially flexible. We prove that such functions do exist. 
To get a necessary system of equations $f_1=0$, $f_2=0$, $f_3=0$, we show how starting by choice of 
a hypersurface in $\mathbf{CP}^{n-1}$ containing no conics, no lines, and no real points one can find such 
a system.
\end{abstract}

\maketitle

\section{Introduction}

For many problems in mathematics and physics one needs to study some properties of a system of
points which depend only on the distances between some or all the pairs of points. In such a case
the corresponding properties should be invariant under the motion of the points as particles of a rigid body
or under an orthogonal change of coordinates. At the same time such a motion leads to 
different positions of the system in space, thus posing a question on geometric or physical identity
of the new system with the initial one. For example, if we look for a simplicial polyhedron with
given combinatorial structure and known edge lengths, then the coordinates of its vertices can be 
obtained as solutions of the system of equations
\begin{equation}\label{E:lengths}
(x_i-x_j)^2+(y_i-y_j)^2+(z_i-z_j)^2=l_{ij}^2,
\end{equation}
where the pair $(i,j)$ varies over all edges of the polyhedron with known length $l_{ij}$. However,
having found two different solutions of these equations, we do not know a priori whether they give
polyhedra that differ only by a continuous motion as rigid bodies or these polyhedra are isometric and
noncongruent ones. Note that we call two polyhedra \emph{isometric} if they have the same combinatorial
structure and the corresponding edges are equal, but the polyhedra are not necessarily related by
an isometry of the ambient Euclidean space. This difficulty is usually formally avoided by saying that 
``solutions are considered up to a motion as a rigid body'' or using the formal operation of factorization of 
the set of polyhedra by the isometry group of the affine Euclidean space $\R^3$. But this trick does not give 
a direct answer to the question whether two close solutions are related by an isometry of $\R^3$ or by a 
non-trivial \emph{flexion} with rigid faces (that is by a continuous family of solutions of \eqref{E:lengths};
one uses the term \emph{bending} too) and in practice one has to somehow check this. In his papers
(see, e.g., \cite{Sabitov98a}, \cite{Sabitov98b}, \cite{Sabitov11}) the first author suggested to replace 
the factorization operation by adding to system \eqref{E:lengths} several new equations which should automatically exclude the ``parasitic'' solutions. For instance, it is always possible to suppose that the 
origin of coordinates is placed in the center of mass of the given system of material points and then to 
adjoin to \eqref{E:lengths} three new equations
\begin{equation}\label{E:cofm}
\sum_j x_j=0, \quad \sum_j y_j=0, \quad \sum_j z_j=0
\end{equation}
(all the masses are supposed to be equal), thus eliminating the parasitic solutions related
to a parallel translation. Further, to exclude solutions that are obtained from a given one by
a rotation around the origin (that is, by a transformation from the group $SO(3)$), we could
impose an additional condition that the system of coordinates is chosen in accordance with the principal
axes of inertia of given points. This choice of coordinates is equivalent to adjoining three more equations
$$\sum_j x_jy_j=0, \quad \sum_j y_jz_j=0, \quad \sum_j x_jz_j=0$$
to system \eqref{E:lengths}. But these new equations work well only under the assumption that
$$\sum_j x_{j}^{2}\ne \sum_j y_{j}^{2}, \quad \sum_j y_{j}^{2}\ne \sum_j z_{j}^{2}, \quad
\sum_j x_{j}^{2}\ne \sum_j z_{j}^{2},$$
otherwise the system of points admits rotations around one of the coordinate axes. At the same
time we look for equations that would exclude rotations for \emph{any} initial position of
the points. Another variant of screening for "true" flexion consists in fixation of
one of the non-degenerate triangular faces. In turn, this method is not universal also since for some 
collections of points the given face can be degenerated so we should know in advance the existence
of a non-degenerated face.

One more example where one needs to somehow fix a system of points is the problem of recovering
of positions of points from the given distances between each pair of them, as in the well known
Lennard-Jones Problem from physics. In Lennard-Jones Problem one seeks a minimum of
a functional which is a function of the distances between points. If, say, the necessary distances
have been determined, then to recover the positions of points of the initial system one has
to choose a unique configuration which suits in the best way some additional requirements. Thus again
one has to look for solutions of a system analogous to \eqref{E:lengths} and eliminate the
repetition of copies. Another important example is the classical $n$-body problem, where
the quotient $\R^{3n}/SO(3)$ (referred to as the \emph{shape space}) gives some interesting
insights, see \cite{Montgomery}.

Now let us formulate our problem and results more precisely. Let $P_1,\dots,P_n$ be a system of 
points in the affine space $\R^m$ with the standard Euclidean structure. Some or even all of the points of
the system can coincide, that is, possess equal coordinates. Let $G$ be a group of isometries of $\R^m$.
If $\rho$ is an element of $G$, then by $\rho P_j$ we denote the image of the point $P_j$ under the action 
of $\rho$. Note that the system $\{P_j\}_{j=1}^{n}$ can be represented by a single point $M_0$ in the
Euclidean space $\R^{mn}=\R^m\times\cdots\times\R^m$, thus we have a natural diagonal
action of the group $G$ on $\R^{mn}$. The orbit of the point $M_0$ in $\R^{mn}$ under this action
will sometimes be referred to also as the orbit of the system $\{P_j\}_{j=1}^{n}$.

Let $f=(f_1,\ldots,f_k)$ be a collection of real valued functions on $\R^{mn}$. Keeping in mind the 
splitting of $\R^{mn}$ as a product of $n$ copies of $\R^m$, we shall consider each of the functions 
$f_1,\ldots,f_k$ also as a function of $n$ points in $\R^m$. We shall say that the system of points
$\{P_j\}_{j=1}^{n}\subset \R^m$ \emph{admits a reduction} by $G$ with respect to $f$, if there exists 
an element $\rho\in G$ such that $f_l(\rho P_1,\dots,\rho P_n)=0$ for each $l=1,\ldots,k$. In other words, 
the variety
$$X_f=\{M\in\R^{mn}\,|\,f_1(M)=\dots=f_k(M)=0\}$$
has a nonempty intersection with the orbit of the system $\{P_j\}_{j=1}^{n}$. We shall say that the
system $\{P_j\}_{j=1}^{n}$ \emph{is fixed} by $f$ with respect to $G$, if the orbit of 
$\{P_j\}_{j=1}^{n}$ has no more than finite number of intersection points with the variety $X_f$.  
For example, for $m=3$ and $G$ the group of parallel translations of $\R^3$, \emph{each} system 
of points $\{P_j\}_{j=1}^{n}\subset \R^3$ admits a reduction with respect to and is fixed by the
functions on the left hand side of \eqref{E:cofm}. Instead of the \emph{collection of functions} $f$, 
we sometimes say that a system of points admits a reduction with respect to (or is fixed by) 
the \emph{system of equations} $f_1=\cdots=f_k=0$.

In Section~\ref{S:dim2} we study the case $m=2$, that is, the case of the Euclidean plane $\R^2$,
and $G=SO(2)$ the group of plane rotations around the origin. We consider the canonical representation
of $SO(2)$ as the group of orthogonal $2$ by $2$ matrices with determinant one which act on
$\R^2$ by left multiplication. We show that \emph{each} system of points 
$\{P_j\}_{j=1}^{n}=\{(x_{j}^{\circ},y_{j}^{\circ})\}_{j=1}^{n}$
on the plane admits a reduction with respect to and in the same time is fixed by the function
$$A(x_1,y_1,\dots,x_n,y_n)=(x_n y_n)^{2n-1}+\cdots+(x_2 y_2)^3+x_1 y_1$$
proposed by our untimely deceased colleague A.\,V. Astrelin. Note that if at least one of the points 
$P_j$ of a system $\{P_j\}_{j=1}^{n}$ is different from the origin, then the orbit of such a system in 
$\R^{2n}$ is homeomorphic to the circle $S^1\simeq SO(2)$.

Now let 
$\{P_j\}_{j=1}^{n}=\{(x_{j}^{\circ},y_{j}^{\circ},z_{j}^{\circ})\}_{j=1}^{n}$
be a system of points in the $3$-dimensional Euclidean space $\R^3$, and $G=SO(3)$ be the group 
of space rotations around the origin in its canonical representation. In Section~\ref{S:dim3}, we prove 
our main result.

\begin{theorem}\label{T:main}
Let $F(w_1,\ldots,w_n)$ be a homogeneous polynomial of some degree $2d$ such that the hypersurface
$X_F$ defined by $F=0$ in the \emph{complex} projective space $\C\P^{n-1}$ contains no conics, no lines,
and no real points. Let
$$H(w_1,\dots,w_n)=w_{1}^{p_1}+w_{2}^{p_2}+\dots+w_{n}^{p_n}, \quad p_1>p_2>\dots>p_n,$$
where $w_j=x_j+iy_j$ are complex variables. Then \emph{each} system of $n$ points in $\R^3$ admits a
reduction by the group $SO(3)$ with respect to the $3$ functions
$$f_1=\re F, \: f_2=\im F, \: f_3=\im H,$$
\emph{considered as functions on} $\R^{3n}$, and at the same time is fixed by $f_1,f_2,f_3$
with respect to $SO(3)$.
\end{theorem}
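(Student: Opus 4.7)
The plan is to identify each rotation in $SO(3)$ with an isotropic vector in $\C^3$ and then study the image of the absolute conic under the coordinate map $(u\cdot v_1,\dots,u\cdot v_n)$. For $R\in SO(3)$ with rows $\rho_1,\rho_2,\rho_3$, I would set $u = \rho_1+i\rho_2 \in \C^3$, observe that $u\cdot u = 0$ and $|u|^2 = 2$, and note that the complex coordinate $w_j = (Rv_j)_1+i(Rv_j)_2$ of the rotated $j$-th point $v_j$ equals $u\cdot v_j$. The class $[u]$ then lies on the absolute conic $Q = \{u\cdot u = 0\}\subset \C\P^2$, and $R\mapsto [u]$ descends to a bijection between the unit sphere $S^2$ of possible $\rho_3$ and the conic $Q$: writing $u = a+ib$ with $a,b\in\R^3$, isotropy means $a\cdot b = 0$ and $|a|=|b|$, and after normalising to $|a|=|b|=1$ the third axis $r_3 := a\times b$ gives a well-defined element of $S^2$, with every $r_3\in S^2$ arising this way.

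Next I would consider the projective-linear map $\varphi\colon \C\P^2 \to \C\P^{n-1}$, $[u]\mapsto [u\cdot v_1:\cdots:u\cdot v_n]$ (defined off the kernel of the underlying linear map), together with the image $C = \varphi(Q)$. Three cases should be distinguished according to the linear span of $v_1,\dots,v_n$: if they span $\R^3$, then $\varphi$ is a linear embedding and $C$ is a smooth irreducible conic; if they span only a plane with unit normal $w$, then $\varphi$ is projection from the real point $[w]$ (which lies off $Q$ since $Q$ has no real points) and $C$ is a line; if the $v_j = c_j v$ are collinear, then $C$ collapses to the real point $[c_1:\cdots:c_n]$. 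The three clauses of the hypothesis on $X_F$ (no conics, no lines, no real points) exactly forbid $C\subset X_F$ in each case. In the first two cases the pullback $F\circ \varphi$ is a degree-$2d$ polynomial on $\C\P^2$ not vanishing on $Q$, so by B\'ezout it has $4d$ zeros on $Q$ (with multiplicity). In the collinear case, $F(u\cdot v_1,\dots,u\cdot v_n) = (u\cdot v)^{2d}F(c_1,\dots,c_n)$, and the ``no real points'' clause forces $u\cdot v = 0$, giving the two $r_3 = \pm v/|v|$. Pulling back via $\varphi|_Q$ yields finitely many $r_3\in S^2$ for which the rotated configuration satisfies $f_1 = f_2 = 0$, establishing the reduction.

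For the fixation, the key observation is that precomposing with $z$-axis rotations, $R\mapsto R_z(\gamma)R$, sends $u\mapsto e^{i\gamma}u$ (an immediate matrix calculation) and hence $w_j\mapsto e^{i\gamma}w_j$. Consequently $F\mapsto e^{2id\gamma}F$ remains zero, while $H\mapsto \sum_{j=1}^n e^{ip_j\gamma}(u\cdot v_j)^{p_j}$. Each $r_3$ found above therefore produces a circle $S^1$ in the orbit on which $F$ vanishes, parametrized by $\gamma$. Along this circle $\im H(\gamma)$ is a real trigonometric polynomial with pairwise distinct frequencies $p_1 > \cdots > p_n$; by the standard Laurent-polynomial argument it vanishes identically only when every coefficient $(u\cdot v_j)^{p_j}$ is zero, that is, only when every $v_j$ is parallel to $r_3$ --- exactly the collinear case. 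Outside the collinear case each such $S^1$ contributes only finitely many zeros of $\im H$, and combining with the finite list of $r_3$'s yields a finite intersection of the orbit with $\{f_1=f_2=f_3=0\}$. In the collinear case the two distinguished $r_3$'s give orbit points with every $w_j = 0$, so $H = \im H = 0$ holds trivially, and exactly two orbit points satisfy the full system.

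The main obstacle is a careful treatment of the degenerate cases --- especially the collinear one, where the ``no real points'' hypothesis on $X_F$ is precisely what prevents the would-be parasitic identity $F(c_1,\dots,c_n)=0$ --- and checking that in the rank-deficient subcases the base locus of $\varphi$ stays off $Q$ (again because $Q$ has no real points). Once the bijection $S^2\leftrightarrow Q$ is set up, the rest reduces to projective B\'ezout, the manifest $z$-axis equivariance of $F$ and $H$, and the elementary fact that a nonzero trigonometric polynomial with distinct positive frequencies has only finitely many zeros on $[0,2\pi)$.
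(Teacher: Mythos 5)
Your route is genuinely different from the paper's and, apart from one missing step, it works. The paper passes to the double cover $SU(2)\simeq S^3\subset\C^2$, where each $w_j$ becomes a quadratic form in $(\alpha,\beta)$; the zero locus of the pullback of $F$ then decomposes into $4d$ complex lines in $\C^2$, and a lemma on complex polynomials restricted to the unit circle (Lemma~\ref{L:zerosofim}) delivers both reduction and fixation at once. You instead quotient $SO(3)$ by the circle of $z$-axis rotations, identify the base $S^2$ with the absolute conic $Q\subset\C\P^2$ via $u=\rho_1+i\rho_2$, and apply B\'ezout to $F\circ\varphi$ on $Q$; your three cases for the span of the $v_j$ (smooth conic, line, real point) match exactly the paper's Cases III, II, I and consume the three hypotheses on $X_F$ in the same way, and your $4d$ points of $Q$ are precisely the images of the paper's $4d$ lines under the Hopf map $S^3\to S^2$. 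The dictionary is clean, and your version avoids the quadratic substitution entirely.

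There is, however, one genuine gap: you never prove that $\im H$ actually \emph{vanishes somewhere} on each circle over a distinguished $r_3$, yet you declare the reduction ``established'' already at the stage $f_1=f_2=0$. Reduction requires a rotation annihilating all three functions. In your fixation paragraph you show that $\im H(\gamma)=\sum_j\im\bigl((u\cdot v_j)^{p_j}e^{ip_j\gamma}\bigr)$ has finitely many zeros on the circle unless all $u\cdot v_j=0$, but finiteness is only the fixation half; existence of a zero is still needed and does not follow from what you wrote. The repair is one line: since every $p_j\ge 1$, this trigonometric polynomial has zero mean over $[0,2\pi]$, so being real-valued and continuous it must vanish at some $\gamma$. (This is exactly the role of the hypothesis $H(0)=0$ in the paper's Lemma~\ref{L:zerosofim}, which is proved there via the maximum principle.) With that line added, and with a word about the trivial orbit where all points sit at the origin, your argument is complete and correct.
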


For the diagonal action of $SO(3)$ on $\R^{3n}$, the orbit of the system $\{P_j\}_{j=1}^{n}$ 
(that is, the orbit of $M_0$ in $\R^{3n}$) is homeomorphic to the real $3$-dimensional projective space
$\R\P^3\simeq SO(3)$ whenever the points $O$, $P_1$, $\dots$, $P_n$ are not collinear, where $O$ is 
the origin (see Lemma~\ref{L:nondegorbit}). If the points $O,P_1,\dots,P_n$ are collinear but at least one 
of the points $P_1,\dots,P_n$ is different from the origin, then the orbit is homeomorphic to the 
$2$-dimensional sphere $S^2$, finally, the orbit is reduced to a single point if all the points coincide with 
the origin.

The full group of affine isometries of $\R^3$ is generated by the subgroup of parallel translations and
by the full orthogonal group $O(3)$. But the orbit of a system of points under the action of $O(3)$
is a union of no more than $2$ orbits of $SO(3)$, thus we get the following corollary.

\begin{corollary}\label{C:fullfix}
Let $G$ be the full group of affine isometries of $\R^3$. Then each system of points in $\R^3$ admits 
a reduction by $G$ with respect to $6$ functions $3$ of which are defined in \eqref{E:cofm} and the
other $3$ in Theorem~\ref{T:main}, and at the same time is fixed by these $6$ functions.
\end{corollary}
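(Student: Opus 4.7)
The plan is to combine Theorem~\ref{T:main} with the well-known fact that the center-of-mass equations \eqref{E:cofm} eliminate translations, exploiting the fact that the full isometry group of $\R^3$ is the semidirect product of the translation subgroup with $O(3)$, and that $O(3)$ has exactly two connected components, each a coset of $SO(3)$.

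For the reduction, let $\{P_j\}_{j=1}^n$ be an arbitrary system of points and let $c$ denote its center of mass. First apply the translation $\tau_{-c}$, obtaining a system whose coordinates now satisfy \eqref{E:cofm}. By Theorem~\ref{T:main}, applied to this translated configuration, there exists an element $\rho\in SO(3)$ such that $f_1,f_2,f_3$ all vanish on $\rho\cdot\tau_{-c}\cdot\{P_j\}$. Since $\rho$ fixes the origin, it preserves the property that the center of mass is at the origin, so the three equations \eqref{E:cofm} remain satisfied after applying $\rho$. Thus $g=\rho\circ\tau_{-c}\in G$ realizes the desired reduction by all six functions simultaneously.

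For the fixation statement, let $X\subset\R^{3n}$ denote the common zero set of the six functions and let $M_0$ represent the system $\{P_j\}_{j=1}^n$. Suppose $M$ and $M'=g\cdot M$ both lie in the orbit of $M_0$ under $G$ and in $X$. Write $g=\tau\circ\sigma$ with $\sigma\in O(3)$ and $\tau$ a translation. Because $\sigma$ preserves the center of mass condition and $M\in X$ already satisfies \eqref{E:cofm}, the configuration $\sigma\cdot M$ still has center of mass at the origin; since $M'$ does too by hypothesis, the translation $\tau$ must be trivial. Hence every intersection of the $G$-orbit of $M_0$ with $X$ actually lies in the $O(3)$-orbit of a fixed translate.

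Finally, the $O(3)$-orbit of any point decomposes as the union of at most two $SO(3)$-orbits, obtained from each other by any fixed orientation-reversing isometry. Theorem~\ref{T:main} guarantees that each such $SO(3)$-orbit meets $\{f_1=f_2=f_3=0\}$ in a finite set, so the whole $O(3)$-orbit meets $X$ in a finite set, proving fixation. There is no real obstacle here beyond bookkeeping: the corollary is a direct consequence of Theorem~\ref{T:main}, and the only point requiring a moment of thought is the compatibility observation that rotations about the origin preserve the center-of-mass condition, which is what allows the two reductions to be carried out in sequence without interference.
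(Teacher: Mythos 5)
Your proof is correct and follows essentially the same route the paper takes (the paper only sketches it in the sentence preceding the corollary): the center-of-mass equations \eqref{E:cofm} kill the translation part, and the $O(3)$-orbit splits into at most two $SO(3)$-orbits, to each of which Theorem~\ref{T:main} applies. Your write-up merely makes explicit the compatibility point that rotations about the origin preserve the center-of-mass condition, which the paper leaves implicit.
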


Corollary~\ref{C:fullfix} can be applied in the theory of flexible polyhedra. Assume now that
$\{P_j\}_{j=1}^{n}$ are the vertices of a polyhedron $P$ in $\R^3$ and we are interested whether
polyhedron $P$ is flexible. Consider the system of algebraic equations \eqref{E:lengths} fixing the 
lengths of edges of $P$ and adjoin to this system $6$ more equations, namely, $3$ equations
\eqref{E:cofm} and $3$ more equations $f_1=f_2=f_3=0$ for $f_1,f_2,f_3$ from Theorem~\ref{T:main}. 
Let us call so obtained system the \emph{extended system} of the polyhedron $P$.

\begin{corollary}\label{C:rigpoly}
(a) A polyhedron $P$ with coordinates of vertices represented by a point 
$M_0=(x_{1}^{\circ},y_{1}^{\circ},z_{1}^{\circ},\dots,x_{n}^{\circ},y_{n}^{\circ},z_{n}^{\circ})$
in $\R^{3n}$ satisfying the extended system is not flexible if and only if there exists a neighborhood $U$ 
of $M_0$ in $\R^{3n}$ such that $M_0$ is the only solution to the extended system of equations in $U$. 
(b) All polyhedra isometric to the given polyhedron $P$ are not flexible if and only if the extended system
of equations of $P$ has only finite number of solutions.
\end{corollary}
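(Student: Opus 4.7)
Both parts will be deduced from Corollary~\ref{C:fullfix}, which asserts that the common zero set $X_f=\{f_1=\cdots=f_6=0\}$ of the six auxiliary functions meets every orbit of $G$ in $\R^{3n}$ in a nonempty finite set; in particular the extended system, which is the intersection $V_l\cap X_f$ with $V_l$ the variety defined by \eqref{E:lengths}, contains finitely many representatives of each $G$-orbit lying in $V_l$.

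The ``$\Rightarrow$'' direction of (a) is immediate. If $P$ is not flexible, then by definition there is a neighborhood $U$ of $M_0$ in which $V_l$ coincides with the single $G$-orbit $G\cdot M_0$. Hence $(V_l\cap X_f)\cap U\subseteq(G\cdot M_0)\cap X_f$, a finite set by Corollary~\ref{C:fullfix}, and shrinking $U$ further leaves $M_0$ as the only solution.

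The ``$\Leftarrow$'' direction is the substantive content, and I would prove its contrapositive. Assuming $P$ admits a non-trivial flexion, the semi-algebraic curve selection lemma produces a real-analytic arc $\gamma:[0,\varepsilon)\to V_l$ with $\gamma(0)=M_0$ and $\gamma(t)\notin G\cdot M_0$ for $t>0$. For each such $t$, Corollary~\ref{C:fullfix} provides some $g(t)\in G$ with $g(t)\gamma(t)\in V_l\cap X_f$; the crux is choosing the family so that $g(t)\gamma(t)\to M_0$ as $t\to 0$. My plan is to apply curve selection a second time, to the semi-algebraic set $Z=\{(g,t)\in G\times[0,\varepsilon):g\gamma(t)\in X_f\}$, which contains $(e,0)$: an analytic branch of $Z$ through $(e,0)$ into $t>0$ yields the desired family and hence extended-system solutions approaching but distinct from $M_0$. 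Alternatively, and perhaps more robustly, one may run a dimension argument, observing that flexibility makes the local dimension of $V_l$ at $M_0$ strictly exceed $\dim(G\cdot M_0)$, so combined with the codimension-six estimate on $X_f$ the intersection $V_l\cap X_f$ has positive local dimension at $M_0$, contradicting isolation. The main obstacle I anticipate is ruling out the real-algebraic pathology in which an expected real intersection becomes complex and so disappears from $\R^{3n}$; I expect this to be handled by the transversality available at smooth points of the orbit, with the degenerate cases treated separately.

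For part (b), both implications follow formally from (a) together with the fact that a real algebraic set all of whose points are isolated must be finite. If every polyhedron in $V_l$ is rigid, then by (a) every point of $V_l\cap X_f$ is isolated, so the set is finite. Conversely, if $V_l\cap X_f$ is finite, every orbit in $V_l$ meets $V_l\cap X_f$ (by Corollary~\ref{C:fullfix}) in an automatically isolated point, and since flexibility is a property of the orbit, (a) gives rigidity of the corresponding polyhedron.
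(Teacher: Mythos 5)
The paper gives no proof of Corollary~\ref{C:rigpoly} at all (it is stated as an immediate consequence of Corollary~\ref{C:fullfix}, followed only by the remark about the Bricard octahedron), so there is no argument of the authors to compare yours with; you are filling in an omitted proof. Your treatment of the ``$\Rightarrow$'' direction of (a) and your formal reduction of (b) to (a) are essentially sound, granting standard facts about real algebraic sets (local conic structure at $M_0$, finiteness of the number of connected components, and compactness of the centered configuration space $V_l\cap\{\sum x_j=\sum y_j=\sum z_j=0\}$, which you should invoke rather than ``a real algebraic set with all points isolated is finite'' alone).

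The ``$\Leftarrow$'' direction of (a) is where the real content lies, and neither of your two strategies closes it. The curve-selection argument applied to $Z=\{(g,t): g\gamma(t)\in X_f\}$ is circular: the lemma produces an arc in $Z\cap\{t>0\}$ emanating from $(e,0)$ only if $(e,0)$ already lies in the closure of $Z\cap\{t>0\}$, and that is exactly the assertion to be proved (that solutions of the extended system on the nearby orbits $G\cdot\gamma(t)$ accumulate at $M_0$ itself). The dimension count fails over $\R$ for the reason you name --- $k$ real equations can cut real codimension greater than $k$ --- and the transversality you hope will rescue it is \emph{not} available: the paper's own remark at the end of Section~\ref{S:dim2} exhibits non-transversal intersections of the auxiliary hypersurface with orbits even at smooth points. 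Concretely, the danger is that $f|_{G\cdot M_0}$ vanishes to even order at $M_0$ (no sign change along the orbit), in which case the nearby orbits $G\cdot\gamma(t)$ may meet $X_f$ only near the \emph{other} points of the finite set $G\cdot M_0\cap X_f$, leaving $M_0$ isolated although $P$ flexes. What compactness of the centered orbits does give for free is that \emph{some} point $N\in G\cdot M_0\cap X_f$ (a congruent copy of $P$, but possibly $N\ne M_0$) is a non-isolated solution: choose $Q_k\in G\cdot\gamma(t_k)\cap X_f$ and pass to a convergent subsequence. This weaker conclusion already yields part (b) in both directions without appealing to the pointwise statement (a); but for (a) itself you still owe an argument that the intersection point at $M_0$ persists, e.g.\ a local degree or parity argument along the orbit, and as it stands your proof does not supply one.
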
 

To facilitate the correct understanding of Corollary~\ref{C:rigpoly} recall that, for example,
together with the flexible Bricard octahedron of the first type there exists an isometric to it
continuously rigid octahedron. Thus, in the neighborhoods of the corresponding points in $\R^{18}$
the solution set to the extended system has different structure.

\begin{remark}
As the reader has perhaps noticed, the functions $f_1$, $f_2$, $f_3$ in Theorem~\ref{T:main}
depend only on the \emph{projections} of the points $P_j$ to the coordinate plane $Oxy$. Evidently 
an analogue of Theorem~\ref{T:main} is valid for the cases when the planes $Oxz$ and $Oyz$ 
are selected.
\end{remark}

\begin{remark}
Existence of hypersurfaces $X_F$ with the properties stated in Theorem~\ref{T:main} follows from
general results of algebraic geometry. For example, we can quote the main result of
\cite{Clemens} which says that a \emph{generic} hypersurface of degree $r$ in $\C\P^n$ has
no \emph{rational curves} as soon as $r\geq 2n-1$. This gives the estimate $2d\geq 2n-3$ in
our case. On the other hand, since we are interested only in lines and conics, Clemens' estimate
can be improved to $r>\frac{1}{2}(3n+2)$ (\cite[Theorem~1.1]{Furukawa}). We insist on even
degree $2d$ to be able to get a hypersurface with no real points. Indeed, for this we can take a
generic hypersurface in a small neighborhood (in the space parameterizing degree $2d$ hypersurfaces 
in $\C\P^n$) of the Fermat hypersurface
$$w_{1}^{2d}+w_{2}^{2d}+\dots+w_{n}^{2d}=0.$$
From computational perspective, it is desirable to choose polynomial $F$ with, let us say,
``simple'' coefficients, i.e., the hypersurface $X_F$ to be defined over $\Q$ or some of its finite
algebraic extensions. Definitely, having chosen a particular hypersurface that seems likely to fulfill
our conditions, it should be possible to check by a computer whether it really does. But we are not 
aware of any general method that would give such a hypersurface for all $n$.
\end{remark}

We know two published works where a problem similar to ours was considered. The first is paper
\cite{Connelly} of R. Connelly, where at the end the author presents an equation restricting
a motion of a system of points in $\R^3$ in such a way that if their motion is a part of a rigid motion
of all of $\R^3$, then the system in reality is fixed. This result is, however, very different from ours because 
Connelly examines not the question of flexibility of a given polyhedron, but only the question
of triviality of a \emph{given flexion} of a polyhedron (or a system of points).
In \cite[Section~2.3.4, Exercise (d')]{Gromov}, M. Gromov proposes to the reader to prove that if
an algebraic foliation of $\R^n$ into codimension $q$ leaves is given, then there exists a
$q$-dimensional algebraic subset of $\R^n$ intersecting all the leaves of the foliation. The main idea,
as can be guessed from the preceding discussion in \cite{Gromov}, is to choose a sufficiently generic
polynomial $f$ on $\R^n$ and to consider the locus of critical points of all the restrictions of $f$ to each
leaf of the foliation. Our results do not, however, follow from this exercise since, in Gromov's approach, 
it is not clear why that $q$-dimensional algebraic subset must be given exactly by $q$ equations 
(i.e., is a complete intersection in the language of algebraic geometry); also, the question whether
the number of intersection points of the algebraic subset with each leaf is finite (the question
of fixation in our terminology) is not discussed in \cite{Gromov}.

\section{Reduction and fixation of a system of points in the plane}\label{S:dim2}

A function $A\colon\R^{2n}\to \R$, where
$$A(x_1,y_1,\dots,x_n,y_n)=(x_n y_n)^{2n-1}+\cdots+(x_2 y_2)^3+x_1 y_1,$$
will be called the \emph{plane Astrelin function}.

\begin{theorem}
Each system $\{P_j\}_{j=1}^{n}$ of points in the plane admits a reduction by the group $SO(2)$ with 
respect to the plane Astrelin function $A$ and at the same time is fixed by $A$ with respect to $SO(2)$.
\end{theorem}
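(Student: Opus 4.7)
The plan is to pass to complex coordinates $w_j=x_j+iy_j$, under which the rotation by angle $\theta\in SO(2)$ acts as $w_j\mapsto e^{i\theta}w_j$, and to exploit the identity $x_jy_j=\tfrac12\im(w_j^2)$. Denoting by $A_\theta$ the value of the Astrelin function at the rotated configuration, the relation $w_j^2\mapsto e^{2i\theta}w_j^2$ makes $\theta\mapsto A_\theta$ manifestly a real trigonometric polynomial in $\theta$, which is the structural fact both halves of the proof will rest on.

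For reduction I will observe that rotating by an extra $\pi/2$ multiplies $w_j$ by $i$ and hence $w_j^2$ by $-1$, so that every factor $x_jy_j$ changes sign. Because each exponent $2j-1$ in the definition of $A$ is odd, this yields $A_{\theta+\pi/2}=-A_\theta$. Continuity of $\theta\mapsto A_\theta$ together with the intermediate value theorem applied on $[0,\pi/2]$ then furnishes some $\theta^*$ with $A_{\theta^*}=0$, which is exactly a reduction of the starting configuration.

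For fixation the case when all $P_j$ coincide with the origin is trivial, since the orbit reduces to a single point. Otherwise, let $j_0$ be the largest index with $P_{j_0}\neq O$, set $r_j=x_j^2+y_j^2$, write $w_j^2=r_je^{i\alpha_j}$, and note that the $j$-th summand of $A_\theta$ equals
$$\bigl(\tfrac12\, r_j\sin(\alpha_j+2\theta)\bigr)^{2j-1}.$$
Expanding $\sin^{2j-1}$ via $\sin y=(e^{iy}-e^{-iy})/(2i)$ shows that this summand is a trigonometric polynomial in $\theta$ of exact top frequency $2(2j-1)=4j-2$, with nonzero leading Fourier coefficient whenever $r_j>0$. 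Since the exponents $1,3,\dots,2n-1$ are strictly increasing, so are the top frequencies $4j-2$; therefore the Fourier coefficient of the full sum $A_\theta$ at frequency $4j_0-2$ comes from the single summand with index $j_0$ alone and is nonzero. Hence $A_\theta$ is a nonzero trigonometric polynomial on $[0,2\pi)$, has only finitely many zeros, and fixation follows.

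The main obstacle is the non-vanishing statement for $A_\theta$: the reduction step uses only that the exponents are odd, whereas fixation uses in an essential way the strict monotonicity of the exponents so that the top Fourier modes of successive summands cannot cancel. This is the whole point of choosing the Astrelin function with strictly increasing odd exponents $1,3,5,\dots,2n-1$.
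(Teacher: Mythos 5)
Your proof is correct and follows essentially the same route as the paper: reduction via the sign change $A_{\theta+\pi/2}=-A_\theta$ (from the odd exponents) plus the intermediate value theorem, and fixation by showing the restriction of $A$ to an orbit is a nonzero trigonometric polynomial whose leading Fourier coefficient, at frequency $4j_0-2$, survives because the strictly increasing exponents prevent cancellation. If anything, your choice of $j_0$ as the largest index with $P_{j_0}\neq O$ is slightly cleaner than the paper's ``we can assume $P_n\neq O$,'' which cannot literally be arranged by reindexing since the exponents of $A$ are tied to the indices.
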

\begin{proof}
If all the points of the given system are concentrated in the origin, then the orbit of the system under
the action of $SO(2)$ is reduced to a single point and the theorem is obviously true in this case.
Thus in the rest of the proof we assume that at least one of the points $P_j$, $j=1,\dots,n$, is different
from the origin.

The reduction with respect to the Astrelin function is easy to prove. If $\sigma$ is a rotation by
$90^\circ$ and $P_j=(x_{j}^{\circ},y_{j}^{\circ})$, then $\sigma P_j=(-y_{j}^{\circ},x_{j}^{\circ})$
and
$$A(\sigma P_1,\dots,\sigma P_n)=-A(P_1,\dots,P_n).$$
Since the group $SO(2)$ is pathwise connected, there exists an angle $\varphi$,
$0\leq \varphi\leq 90^\circ$, such that the rotation $\rho_\varphi$ by this angle reduces the system:
$$A(\rho_\varphi P_1,\dots,\rho_\varphi P_n)=0.$$

The proof of fixation starts with the following parametrization of $SO(2)$:
$$SO(2)=\left\{
\begin{pmatrix}
\cos\theta & -\sin\theta \\
\sin\theta & \cos\theta
\end{pmatrix}\, \Big|\,
\theta\in\R \right\}.
$$
The restriction of the plane Astrelin function to an orbit is given by the function
\begin{align*}
A(\theta) &= [\frac{1}{2}((x_{n}^{\circ})^{2}-(y_{n}^{\circ})^{2})\sin 2\theta
      +x_{n}^{\circ} y_{n}^{\circ}\cos 2\theta]^{2n-1}+ \cdots+ \\
&+ [\frac{1}{2}((x_{2}^{\circ})^{2}-(y_{2}^{\circ})^{2})\sin 2\theta
      +x_{2}^{\circ} y_{2}^{\circ}\cos 2\theta]^3+ \\
&+ [\frac{1}{2}((x_{1}^{\circ})^{2}-(y_{1}^{\circ})^{2})\sin 2\theta
                    +x_{1}^{\circ} y_{1}^{\circ}\cos 2\theta],
\end{align*}
which is analytic as a function of $\theta$. If the equation $A(\theta)=0$ had infinite set of solutions
on the interval $[0,2\pi]$, then the function $A(\theta)$ would vanish identically, and thus all of its
Fourier coefficients would vanish too. The Fourier coefficient with the biggest number can be found
from the expansion into Fourier series of the first summand
$$A_{2n-1}(\theta)=[\frac{1}{2}((x_{n}^{\circ})^{2}-(y_{n}^{\circ})^{2})\sin 2\theta
      +x_{n}^{\circ} y_{n}^{\circ}\cos 2\theta]^{2n-1}.$$
We can assume that $P_n\ne O$, thus this first summand is nonzero.
Consider a representation
$$A_{2n-1}(\theta)=\Big(\alpha_n \frac{e^{2i\theta}+e^{-2i\theta}}{2}+
\beta_n\frac{e^{2i\theta}-e^{-2i\theta}}{2i}\Big)^{2n-1}=
(\gamma_n e^{2i\theta}+\bar{\gamma}_n e^{-2i\theta})^{2n-1},$$
where
$$\alpha_n= x_{n}^{\circ} y_{n}^{\circ},\: \beta_n= \frac{1}{2}((x_{n}^{\circ})^2-(y_{n}^{\circ})^2),\:
\gamma_n=\frac{\alpha_n-i\beta_n}{2}.$$
Note that $\gamma_n\ne 0$. Further, by Newton binomial formula
$$(\gamma_n e^{2i\theta}+\bar{\gamma}_n e^{-2i\theta})^{2n-1}=
\sum_{k=0}^{2n-1} C_{2n-1}^{k} \gamma_{n}^{k}\bar{\gamma}_{n}^{2n-1-k} e^{i(4k-4n+2)\theta}.$$
The leading Fourier coefficient $a_{4n-2}+ib_{4n-2}$ is calculated by the formula
$$a_{4n-2}+ib_{4n-2}=\frac{1}{\pi}\int_{0}^{2\pi}
\sum_{k=0}^{2n-1} C_{2n-1}^{k} \gamma_{n}^{k}\bar{\gamma}_{n}^{2n-1-k} e^{i(4k)\theta}\,d\theta,$$
and from this entire sum only the summand with $k=0$ survives. Therefore,
$$a_{4n-2}+ib_{4n-2}=2\bar{\gamma}_{n}^{2n-1}\ne 0,$$
that is $A(\theta)\not\equiv 0$. We get a contradiction that proves the theorem.
\end{proof}

\begin{example}
Consider the square with initial position of vertices $P_1=(1,0)$, $P_2=(0,1)$, $P_3=(-1,0)$, $P_4=(0,-1)$.
Introducing polar coordinates by the formulae
$$x_j=\cos\Big(\varphi+(j-1)\frac{\pi}{2}\Big), \quad y_j=\sin\Big(\varphi+(j-1)\frac{\pi}{2}\Big), \quad
j=1,2,3,4,$$
we reduce Astrelin's equation 
$$x_1y_1+(x_2y_2)^3+(x_3y_3)^5+(x_4y_4)^7=0$$
to the equation
$$\frac{1}{2}\sin 2\varphi-\frac{1}{8}\sin^3 2\varphi+\frac{1}{32}\sin^5 2\varphi-
\frac{1}{128}\sin^7 2\varphi =0,$$
or
$$\frac{1}{2} \sin 2\varphi \frac{1-\frac{1}{256}\sin^8 2\varphi}{1+\frac{1}{4}\sin^2 2\varphi} =0.$$
It follows that $\varphi=\pi k/2$, i.e., the square is fixed by Astrelin function with respect to rotation
in its initial position and in the positions which differ from the initial one by an angle multiple to $\pi/2$.
In the same time this quadrangle admits a deformation
\begin{equation}\label{E:square}
\begin{array}{c}
x_1(t)=1-t,\; y_1(t)=0;\quad x_2(t)=0,\; y_2(t)=\sqrt{1+2t-t^2}; \\
x_3(t)=-1+t,\; y_3(t)=0;\quad x_4(t)=0,\; y_4(t)=-\sqrt{1+2t-t^2}
\end{array}
\end{equation}
keeping the lengths of edges, the center of mass, and satisfying Astrelin's condition. So then we can 
affirm that the deformation \eqref{E:square} is a nontrivial flexion of the square (compare with (a)
of Corollary~\ref{C:rigpoly}). At the same time, the fact that the same deformation satisfies the classical
condition $\sum_j x_j y_j=0$ does not ensure its non-triviality. 
\end{example}

\begin{remark}
The hypersurface
$$X_A=\{(x_1,y_1,\dots,x_n,y_n)\in\R^n\,|\,A(x_1,y_1,\dots,x_n,y_n)=0\}\subset\R^{2n}$$
is singular at points where $x_1=y_1=0$ and for each $j\geq 2$ $x_j=0$ or $y_j=0$. But even at its
smooth points $X_A$ can have non-transversal intersections with orbits of $SO(2)$. Indeed, the tangent
vector to the orbit of $SO(2)$ at a point $(x_1,y_1,\dots,x_n,y_n)\in \R^{2n}$
is $(-y_1,x_1,\dots,-y_n,x_n)$. Thus, this point is a non-transversal intersection point of $X_A$ with an
orbit if and only if
$$\begin{cases}
(x_n y_n)^{2n-1}+\cdots+(x_2 y_2)^3+x_1 y_1=0, \\
(2n-1)(x_n y_n)^{2n-2}(x_{n}^{2}-y_{n}^{2})+\cdots+3(x_2 y_2)^{2}(x_{2}^{2}-y_{2}^{2})+
(x_{1}^{2}-y_{1}^{2})=0.
\end{cases}
$$
An example of a non-zero solution to this system of equations can be obtained by letting $x_j=y_j$ for all 
$j=2,\dots,n$ and $x_1=-y_1$, so that the second equation is satisfied, then choosing arbitrary non-zero
values for $x_j=y_j$ for $j=2,\dots,n$, and, finally, choosing $x_1=-y_1$ so that the first equation is also
satisfied. 
\end{remark}

\section{Reduction and fixation of a system of points in space}\label{S:dim3}

The main purpose of this section is to prove Theorem~\ref{T:main}. But we start with a lemma 
describing the non-degenerate orbits of the natural action of $SO(3)$ on $\R^{3n}$, $n\geq 2$.
This lemma is not needed for the proof of Theorem~\ref{T:main}, however, we believe that
it is useful for better understanding of the situation under study.

\begin{lemma}\label{L:nondegorbit}
Assume that not all of the points of a system $\{O\}\cup\{P_j\}_{j=1}^{n}\subset\R^3$, $n\geq 2$,
are collinear. Then the orbit in $\R^{3n}$ of the system $\{P_j\}_{j=1}^{n}$ under the natural
action of the group $SO(3)$ is homeomorphic (in fact, diffeomorphic) to the Lie group $SO(3)$
itself, which in turn is homeomorphic to the projective space $\R\P^3$.
\end{lemma}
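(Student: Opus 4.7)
The plan is to exhibit the orbit as a homogeneous space $SO(3)/\mathrm{Stab}(M_0)$ and then show that under the non-collinearity assumption the stabilizer is trivial, so that the orbit map from $SO(3)$ to the orbit is a diffeomorphism. The final identification $SO(3)\simeq \R\P^3$ is classical (via the double cover by unit quaternions, or by noting that $SO(3)$ acts freely and transitively on the unit tangent bundle of $S^2$, or simply by sending a rotation to its axis/angle representation modulo the antipodal identification).

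First I would set up the orbit map $\Phi\colon SO(3)\to \R^{3n}$, $\Phi(\rho)=(\rho P_1,\dots,\rho P_n)$. This is a smooth map of manifolds, the orbit of $M_0=(P_1,\dots,P_n)$ is by definition its image, and the fibers of $\Phi$ are precisely the cosets of the stabilizer
\[
\mathrm{Stab}(M_0)=\{\rho\in SO(3)\,|\,\rho P_j=P_j\text{ for all }j=1,\dots,n\}.
\]
So it is enough to prove $\mathrm{Stab}(M_0)=\{e\}$; then the standard theory of proper Lie group actions (or directly: a continuous bijection from the compact space $SO(3)$ onto a Hausdorff subspace of $\R^{3n}$ is a homeomorphism, and $d\Phi$ is everywhere injective because its kernel at $\rho$ corresponds to the Lie algebra of the stabilizer) gives that $\Phi$ is a diffeomorphism onto its image.

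Next I would verify that $\mathrm{Stab}(M_0)$ is trivial. The hypothesis that the points $O,P_1,\dots,P_n$ are not all collinear means in particular that at least one $P_j$ is different from $O$ (otherwise they are all equal to $O$ and hence collinear). Pick such a $P_j$; any $\rho\in\mathrm{Stab}(M_0)$ must fix $P_j\ne O$, so $\rho$ is a rotation about the line $\ell_j=OP_j$ (which includes $\rho=e$). Now by the non-collinearity hypothesis there must exist some $P_k$ not lying on $\ell_j$. Since $\rho$ fixes $P_k$ and $\rho$ is a rotation about $\ell_j$ with $P_k\notin\ell_j$, this forces $\rho=e$.

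The main (mild) obstacle is just making the orbit-map argument precise: one has to combine the group-theoretic computation of the stabilizer with the general fact that for a smooth action of a compact Lie group the orbit of a point with trivial stabilizer is an embedded submanifold diffeomorphic to the group. Everything else, including the concluding identification $SO(3)\simeq \R\P^3$, is standard and requires no computation beyond what is sketched above.
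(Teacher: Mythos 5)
Your proof is correct. It follows the same overall skeleton as the paper --- both arguments study the orbit map $\rho\mapsto\rho\cdot M_0$ and show it is an injective immersion from the compact group $SO(3)$, hence an embedding --- but the two steps are justified quite differently. For injectivity, the paper argues that the non-collinear position vectors of $P_1$ and $P_2$ together with their cross product form a basis of $\R^3$ on which two distinct rotations cannot agree; your stabilizer computation (a rotation fixing $P_j\ne O$ is a rotation about the axis $OP_j$, and fixing a further point off that axis forces it to be the identity) is an equivalent, equally elementary rephrasing. The real divergence is in the immersion step: the paper first reduces, via equivariance and rescaling, to checking the differential at the identity for a normalized configuration $M_0=(1,0,0,a,b,0,\dots)$ with $b\ne 0$, and then computes the Jacobian explicitly in Euler angles, exhibiting a nonvanishing $3\times 3$ minor. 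You instead invoke the general fact that the kernel of the differential of the orbit map at the identity is the Lie algebra of the stabilizer, which vanishes since the stabilizer is trivial. Your route is shorter and avoids the coordinate computation, at the cost of leaning on standard (but nontrivial) Lie-theoretic machinery --- the identity $\ker d\Phi_e=\mathrm{Lie}(\mathrm{Stab}(M_0))$ and the embedding theorem for orbits of compact group actions --- whereas the paper's computation is self-contained. Both are complete proofs; the concluding identification $SO(3)\simeq\R\P^3$ is classical in either case.
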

\begin{proof}
Suppose that the first two points $P_1$ and $P_2$ of the given system are not collinear with
the origin $O$ of $\R^3$. Recall that we denote $M_0$ the point of $\R^{3n}$ corresponding
to the system $\{P_j\}_{j=1}^{n}$ and denote by $P_{M_0}$ the map
$$P_{M_0}\colon SO(3)\to \R^{3n}, \quad \rho\mapsto \rho\cdot M_0.$$

First let us show that $P_{M_0}$ is an immersion. It follows from the fact that the map $P_{M_0}$ 
commutes with the action of $SO(3)$ (i.e., for any $\rho,\sigma\in SO(3)$ we have 
$P_{M_0}(\rho\sigma)=\rho P_{M_0}(\sigma)$) and with rescaling ($P_{\lambda M_0}=
\lambda P_{M_0}$) that it is enough to check the differential of $P_{M_0}$ at the unity
$1\in SO(3)$ and for $M_0=(1,0,0,a,b,0,\ldots)$, $b\ne 0$. Let us use the parametrization
of $SO(3)$ by Euler angles:
$$\begin{array}{c}
\rho(\alpha,\beta,\gamma)= \\
{\small
\begin{pmatrix}
\cos\alpha \cos\gamma- \cos\beta \sin\alpha \sin\gamma & 
-\cos\gamma \sin\alpha- \cos\alpha \cos\beta \sin\gamma & \sin\beta \sin\gamma \\
\cos\beta \cos\gamma \sin\alpha- \cos\alpha \cos\gamma &
\cos\alpha \cos\beta \cos\gamma- \sin\alpha \sin\gamma & -\cos\gamma \sin\beta \\
\sin\alpha \sin\beta & \cos\alpha \sin\beta & \cos\beta
\end{pmatrix}.}
\end{array}
$$
The unity $1\in SO(3)$ corresponds to $\alpha=\beta=\gamma=0$. A routine calculation reveals that 
the (transpose of) the Jacobi matrix of $P_{M_0}$ at $1$ is
$$J^T=\begin{pmatrix}
0 & 1 & 0 & -b & a & 0 & \dots \\
0 & 0 & 0 & 0  & 0 & b & \dots \\
0 & -1 & 0 & -b & -a & 0 & \dots
\end{pmatrix}.
$$
The minor of $J$ formed by the second, the fourth, and the sixth column is $2b^2\ne 0$, thus
$P_{M_0}$ is indeed an immersion.

Note that the two non-collinear position vectors of the points $P_1$ and $P_2$ determine
by the cross product the third vector which together with the first two forms a basis of $\R^3$. Since
two different rotations $\rho,\sigma\in SO(3)$ cannot act identically on the same basis, it
follows that $\rho\cdot M_0\ne \sigma\cdot M_0$, and hence $P_{M_0}$ is injective.
\end{proof}

We now turn to the proof of Theorem~\ref{T:main}.
\smallskip

\noindent \textit{Proof of Theorem~\ref{T:main}}. Recall that the group  $SU(2)$ is a $2$-fold
covering of the group $SO(3)$. We shall need explicit formulae for this covering. The group $SU(2)$
consists of the matrices
$$\rho=\begin{pmatrix}
\alpha & \beta \\
-\bar{\beta} & \bar{\alpha}
\end{pmatrix}
$$
where $\alpha,\beta\in \C$ и $|\alpha|^2+|\beta|^2=1$. If we represent a point $P_j=
(x_{j}^{\circ},y_{j}^{\circ},z_{j}^{\circ})\in \R^3$ by the matrix
$$H_j=\begin{pmatrix}
z_{j}^{\circ} & x_{j}^{\circ}+i y_{j}^{\circ} \\
x_{j}^{\circ}-i y_{j}^{\circ} & -z_{j}^{\circ}
\end{pmatrix},
$$
then the action of the $3$-dimensional rotation corresponding to the matrix $\rho$ on the point $P_j$
can be computed as
$$H_j\mapsto \rho H_{j} \rho^{-1}$$
(see \cite[Chapter 7 \S 1.3]{Kostrikin}). A direct calculation shows that after the rotation $\rho$
the projection of $P_j$ to the plane $Oxy$ (represented as a complex number) is
$$w_j=c_j \alpha^2-\bar{c}_{j} \beta^2-2z_{j}^{\circ} \alpha \beta,$$
where $c_j=x_{j}^{\circ}+i y_{j}^{\circ}$. In particular, the projection of $P_j$
to the plane $Oxy$ as a function of rotation $\rho$ is a restriction of an \emph{analytic function}
(quadratic form) of variables $\alpha$ and $\beta$ from the space $\C^2$ to the unit $3$-dimensional
sphere
$$S^3=\{(\alpha,\beta)\in \C^2 \,|\, |\alpha|^2+|\beta|^2=1\}\simeq SU(2).$$

\begin{lemma}\label{L:not0}
The function $F$ does not identically vanish on any of the orbits of $SO(3)$ on $\R^{3n}$ with
the exception of the trivial case when the orbit reduces to the single point $0\in\R^{3n}$.
\end{lemma}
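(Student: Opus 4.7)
The plan is to parameterize the $SO(3)$-orbit of $M_0$ by the double cover $SU(2)$ and exploit the fact that, in these coordinates, the restriction of $F$ to the orbit extends to a \emph{holomorphic} polynomial. By the formula already derived in the text, after rotating by $\rho\in SU(2)$ the projection of $P_j$ to the $Oxy$-plane is
$$w_j = c_j\alpha^2 - \bar c_j\beta^2 - 2z_j^{\circ}\alpha\beta,$$
a homogeneous quadratic form in the complex variables $(\alpha,\beta)$. Consequently $\tilde F(\alpha,\beta):=F(w_1,\ldots,w_n)$ is a homogeneous polynomial of degree $4d$ in $(\alpha,\beta)$ alone, and identical vanishing of $F$ on the orbit is equivalent to vanishing of $\tilde F$ on the $3$-sphere $S^3=\{|\alpha|^2+|\beta|^2=1\}$.

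I would first argue that vanishing of $\tilde F$ on $S^3$ forces $\tilde F$ to be identically zero as a polynomial: its complex zero locus is a complex curve in $\C^2$ (of real dimension $2$), which cannot contain the real $3$-manifold $S^3$. Having reduced to the case $\tilde F\equiv 0$, I consider the induced morphism
$$\phi\colon \C\P^1 \to \C\P^{n-1},\qquad [\alpha:\beta]\mapsto [w_1:\cdots:w_n]$$
(defined on all of $\C\P^1$ after clearing common factors of the $w_j$, since the source is a smooth curve). Its image lies in $X_F$, and because $\phi$ is given by forms of degree at most $2$, a standard analysis of base-point-free pencils shows that this image is either a single point, a line, or a smooth conic. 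The hypotheses that $X_F$ contains no lines and no conics therefore force the image to be a single point.

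The main obstacle is then to show that this point is \emph{real}, contradicting the third hypothesis on $X_F$. The image being a point means
$$(c_j,\; -2z_j^{\circ},\; -\bar c_j)=\lambda_j\,(q_1,q_2,q_3),\qquad j=1,\ldots,n,$$
for some common nonzero triple $(q_1,q_2,q_3)$ and scalars $\lambda_j$; the degenerate alternative, in which all $\lambda_j=0$, forces every $P_j=O$ and is the excluded trivial orbit. When $q_1\ne 0$, conjugating the first coordinate and comparing with the third yields $\bar\lambda_j/\lambda_j = -q_3/\bar q_1$, a constant independent of $j$; the nonzero $\lambda_j$'s therefore share a common argument modulo $\pi$, so $[\lambda_1:\cdots:\lambda_n]$ is a real point of $X_F$. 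The remaining case $q_1=0$ forces $c_j=0$ for every $j$ and, since not every $\lambda_j$ vanishes, also $q_3=0$; the middle coordinate then gives $\lambda_j=-2z_j^{\circ}/q_2\in\R$ directly. Either way we obtain a real point on $X_F$, contradicting the hypothesis and finishing the proof.
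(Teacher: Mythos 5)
Your proof is correct and follows essentially the same route as the paper's: the $SU(2)$ parametrization turns the restriction of $F$ into a holomorphic polynomial $\tilde F$ on $\C^2$ whose vanishing on $S^3$ forces $\tilde F\equiv 0$, and the resulting point/line/conic trichotomy for the image of the induced map $\C\P^1\to\C\P^{n-1}$ is then played off against the three hypotheses on $X_F$. The only difference is organizational: the paper indexes its three cases by the affine configuration of $O,P_1,\dots,P_n$ (collinear, coplanar, spanning) and handles the collinear case via the direct factorization $F=w_1^{2d}F(1,\lambda_2,\dots,\lambda_n)$, whereas you index the cases by the image of the morphism and verify by hand that a point image must be a \emph{real} point of $X_F$ --- two equivalent ways of invoking the ``no real points'' hypothesis.
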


\begin{proof}
Suppose that at least one of the points of the system $\{P_j\}_{j=1}^{n}$ is different from the
origin. Without loss of generality we can assume that it is $P_1$.
\smallskip

\noindent \emph{Case I}: the points $O,P_1,\dots,P_n$ lie on the same line. Then, the coordinates
of all the points $P_2,\dots,P_n$ are proportional to the coordinates of the point $P_1$, and the same
holds for the projections of the points to the plane $Oxy$:
$$c_j=\lambda_j c_1,\: \lambda_j\in\R,\: j=2,\dots,n.$$
This relation is preserved under each rotation of the system around the origin. Substituting
$w_j=\lambda_j w_1$ to the function $F$, we get
$$F(w_1,\lambda_2 w_1,\dots,\lambda_n w_1)=w_{1}^{2d} F(1,\lambda_2,\dots,\lambda_n).$$
Since the hypersurface $X_F=\{F=0\}$ has no real points, the last expression can vanish only if
$w_1=0$, i.e., if all points are disposed on the axis $Oz$.
\smallskip

\noindent \emph{Case II}: the points $O,P_1,\dots,P_n$ don't lie on the same line, but they are
in the same plane. Without loss of generality we assume that the position vectors of $P_1$ and $P_2$ 
are linearly independent and the rest are their linear combinations. The same dependence holds also
for the projections and is preserved under any rotation of the system of points around the origin.
Let
$$c_j=\lambda_j c_1+\mu_j c_2,\:\lambda_j,\mu_j\in\R,\: j=3,\dots,n.$$
The quadratic forms
\begin{align*}
w_1 &=c_1 \alpha^2-\bar{c}_{1} \beta^2-2z_{1}^{\circ} \alpha \beta, \\
w_2 &=c_2 \alpha^2-\bar{c}_{2} \beta^2-2z_{2}^{\circ} \alpha \beta
\end{align*}
are also linear independent, and thus define a $4$-fold ramified covering of the complex
$2$-dimensional $w$-plane $\C^2$ by the $(\alpha,\beta)$-plane $\C^2$. The formulae
\begin{align*}
w_1 &=c_1 \alpha^2-\bar{c}_{1} \beta^2-2z_{1}^{\circ} \alpha \beta, \\
w_2 &=c_2 \alpha^2-\bar{c}_{2} \beta^2-2z_{2}^{\circ} \alpha \beta, \\
w_j &=\lambda_j w_1+\mu_j w_2,\: j=3,\dots,n,
\end{align*}
define a map of the $(\alpha,\beta)$-plane $\C^2$ onto a $2$-dimensional complex linear subspace
$L$ of the space $\C^n$. If the function $F$ vanished identically on the orbit of the system
$\{P_j\}_{j=1}^{n}$, then it would vanish identically also on the subspace $L$. Indeed,
the set of zeros of an analytic function on $\C^2$ has real dimension $2$ or $4$ (or is empty,
if the function is a non-zero constant), thus since our function vanishes on the $3$-dimensional
unit sphere $S^3\subset \C^2$, it must vanish on the all of $\C^2$. Passing to the
projectivization, vanishing of $F$ on $L$ would mean that the hypersurface $X_F\subset
\mathbf{CP}^{n-1}$ contained a line, which would contradict to our assumptions.
\smallskip

\noindent \emph{Case III}: the points $O,P_1,\dots,P_n$ do not lie in one plane. As before,
we assume from the beginning that the position vectors of the points $P_1,P_2,P_3$ are linearly
independent and
$$c_j=\lambda_j c_1+\mu_j c_2+\nu_j c_3$$
for some $\lambda_j,\mu_j,\nu_j\in\R$,  $j=4,\dots,n$. Then, the quadratic forms
\begin{align*}
w_1 &=c_1 \alpha^2-\bar{c}_{1} \beta^2-2z_{1}^{\circ} \alpha \beta, \\
w_2 &=c_2 \alpha^2-\bar{c}_{2} \beta^2-2z_{2}^{\circ} \alpha \beta, \\
w_3 &=c_3 \alpha^2-\bar{c}_{3} \beta^2-2z_{3}^{\circ} \alpha \beta
\end{align*}
are also linear independent and define a map of the $(\alpha,\beta)$-plane $\C^2$ onto
a non-degenerate (i.e., not splitting into $2$ planes) quadratic cone in the $3$-dimensional
complex $w$-space $\C^3$. Note that this is nothing else but the affine Veronese map from
$\C^2$ to $\C^3$. Formulae
\begin{align*}
w_1 &=c_1 \alpha^2-\bar{c}_{1} \beta^2-2z_{1}^{\circ} \alpha \beta, \\
w_2 &=c_2 \alpha^2-\bar{c}_{2} \beta^2-2z_{2}^{\circ} \alpha \beta, \\
w_3 &=c_3 \alpha^2-\bar{c}_{3} \beta^2-2z_{3}^{\circ} \alpha \beta, \\
w_j &=\lambda_j w_1+\mu_j w_2+\nu_j w_3,\: j=4,\dots,n,
\end{align*}
define a map from the $(\alpha,\beta)$-plane $\C^2$ onto a $2$-dimensional non-degenerate
quadratic cone $Q$ in the space $\C^n$. If the function $b$ vanished identically on the orbit
of the system of points $\{P_j\}_{j=1}^{n}$, then it would vanish identically also on the cone $Q$.
But then the projectivization of $Q$ (i.e., a non-degenerate conic) would be contained in the
hypersurface $X_b\subset\mathbf{CP}^{n-1}$, which is again impossible due to our assumptions.
\end{proof}

Let us continue the proof of Theorem~\ref{T:main}. Consider the cases analogous to the proof
of Lemma~\ref{L:not0}. If all the points of the system are concentrated at the origin, then the
theorem is obvious. In \emph{Case I} we saw that both the functions $\re F$, $\im F$ (and thus
also the function $F$) vanish if and only if all the points lie on the coordinate axis $Oz$.
Already this provides fixation of the system. But for such position of points the function $\im H$
vanishes as well, so we have reduction too.

In \emph{Cases II and III} let us denote by $\tilde{F}$ and $\tilde{H}$ respectively the functions 
obtained from $F$ and $H$ by the substitution
\begin{equation}\label{E:su2tocn}
w_j=c_j \alpha^2-\bar{c}_{j} \beta^2-2z_{j}^{\circ} \alpha \beta,\: j=1,\dots,n.
\end{equation}
They are polynomials of $2$ complex variables $\alpha$ and $\beta$. The polynomial $\tilde{F}$ 
is homogeneous of degree $4d$ and, as we checked in Lemma~\ref{L:not0}, does not identically vanish. 
It follows that it decomposes into linear factors. In other words, the set of zeros of $\tilde{F}$ on $\C^2$ 
is a union of $4d$ (counted with multiplicities) complex $1$-dimensional subspaces (lines). The map 
from $\C^2$ to $\C^n$ defined by formulae~\eqref{E:su2tocn} transforms such lines into 
$1$-dimensional subspaces (lines) of the space $\C^n$. On the other hand, the function $H$ does 
not identically vanish on any of the complex lines passing through $0\in\C^n$ of the space $\C^n$. 
Hence, also the function $\tilde{H}$ does not identically vanish on any of the lines of the space $\C^2$. 
Now Theorem~\ref{T:main} can be deduced from the following simple fact.

\begin{lemma}\label{L:zerosofim}
Let $f(z)$ be a non-constant complex polynomial considered as a function $f\colon\C\to \C$.
Assume also that $f(0)=0$. Then the set of points of the unit circle
$S^1=\{z\in \C\,|\,|z|=1\}$
at which $f$ takes real values is not empty and finite.
\end{lemma}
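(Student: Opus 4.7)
The plan is to restrict $f$ to the unit circle and study the real trigonometric polynomial
$$g(\theta) := \im f(e^{i\theta}), \qquad \theta \in [0,2\pi].$$
Writing $f(z)=\sum_{k=1}^{n} a_k z^k$ (with no constant term since $f(0)=0$, and $n=\deg f \geq 1$, $a_n\ne 0$, since $f$ is non-constant), a direct expansion yields
$$g(\theta)=\sum_{k=1}^{n}\bigl[\re(a_k)\sin(k\theta)+\im(a_k)\cos(k\theta)\bigr].$$
The coefficient of the $n$-th harmonic is $(\re a_n,\im a_n)\ne 0$, so $g\not\equiv 0$.

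For nonemptiness of the zero set, I would argue via the mean value of $g$. Since $\int_0^{2\pi}\sin(k\theta)\,d\theta=\int_0^{2\pi}\cos(k\theta)\,d\theta=0$ for every $k\geq 1$, we have $\int_0^{2\pi} g(\theta)\,d\theta=0$. A continuous real function with zero mean that is not identically zero must take both positive and negative values, so by the intermediate value theorem $g$ has at least one zero on $[0,2\pi)$. This is the step where the hypothesis $f(0)=0$ is essential: without it the mean of $g$ would equal $\im f(0)$, which need not vanish (for example $f(z)=z+2i$ gives $g(\theta)=\sin\theta+2>0$).

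For finiteness, I would substitute $z=e^{i\theta}$ in the identity $2ig(\theta)=f(e^{i\theta})-\overline{f(e^{i\theta})}$ and then multiply through by $z^n$. Using $\overline{z^k}=z^{-k}$ on $S^1$, the right-hand side becomes a polynomial in $z$ of degree exactly $2n$ with leading coefficient $a_n\ne 0$, whose roots on $|z|=1$ are in bijection with the zeros of $g$. Such a polynomial has at most $2n$ complex roots, and hence $g$ has at most $2n$ zeros on $S^1$.

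I do not anticipate any serious obstacle; the only subtlety, verifying that $g\not\equiv 0$ and consequently that the associated polynomial of degree $2n$ is nonzero, follows at once from $a_n\ne 0$, while the crucial use of $f(0)=0$ is precisely to make the mean-value / intermediate-value argument go through.
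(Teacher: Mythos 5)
Your proof is correct, but it follows a genuinely different route from the paper's. The paper treats $\im f$ as a harmonic function: non-emptiness comes from the minimum/maximum principle (since $\im f(0)=0$, the harmonic function $\im f$ cannot be of constant sign on the boundary circle), and finiteness comes from a real-algebraic-geometry argument (the circle is an irreducible real algebraic set, so if $\im f$ vanished on infinitely many of its points it would vanish on all of $S^1$, hence on the whole disc by the maximum principle, forcing $f$ to be real-valued and constant). You instead expand $f(e^{i\theta})$ explicitly as a trigonometric polynomial $g(\theta)$ with no constant term: non-emptiness follows from $\int_0^{2\pi}g=0$ (which is the mean value property in disguise, and is exactly where $f(0)=0$ enters) together with the intermediate value theorem, and finiteness follows from clearing denominators in $f(z)-\overline{f(z)}$ on $|z|=1$ to obtain a nonzero polynomial of degree $2n$ whose roots on the circle capture the zeros of $g$. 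Your argument is more elementary --- it avoids both the maximum principle and the complexification/irreducibility step --- and it yields the additional quantitative information that the number of points where $f$ is real on $S^1$ is at most $2\deg f$, which the paper's soft argument does not provide; the paper's approach, on the other hand, is shorter to state and adapts to situations where an explicit Fourier expansion is unavailable. The only steps worth stating a bit more carefully are that $g\not\equiv 0$ follows from uniqueness of the Fourier coefficients (the $n$-th harmonic has coefficient pair $(\re a_n,\im a_n)\ne(0,0)$), and that the correspondence between zeros of $g$ on $[0,2\pi)$ and roots of the degree-$2n$ polynomial on $|z|=1$ is injective because $\theta\mapsto e^{i\theta}$ is; both are immediate.
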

\begin{proof}
The proof follows from standard theorems of analysis, but we provide it for the sake of completeness.
If the polynomial $f$ took no real values on the unit circle, its imaginary part $\im f$ would be
either strictly positive or strictly negative on the circle. But $\im f=0$ at $z=0$. It follows
that the harmonic function $\im f$ would attain its minimum (or maximum) inside the
open unit disc, but that would violate the maximum principle for $\im f$.

Now assume that $\im f$ vanishes on an infinite set of points of the unit circle.
Both the unit circle $S^1$ and the zero set of $\im f$ are real algebraic sets (because $f$ is
a polynomial) and, moreover, the circle is irreducible and remains irreducible after complexification. 
It follows that $S^1\subseteq (\im f)^{-1}(0)$, i.e., the harmonic polynomial $\im f$ vanishes 
everywhere on the circle $S^1$. Again by the maximum principle for harmonic functions we can
claim that $\im f=0$ on the whole unit disc and, since $\im f$ is a polynomial, on the whole
complex plane $\C$. In other words, the complex polynomial $f$ takes only real values. 
But since $f$ is non-constant, this is a contradiction with, say, openness of an analytic map. 
\end{proof}

To finish the proof of Theorem~\ref{T:main}, apply Lemma~\ref{L:zerosofim} to the restrictions
of the polynomial $\tilde{H}$ to each of the lines that constitute the set of zeros of the polynomial
$\tilde{F}$. \qed

\begin{remark}
It is clear from the proof of Theorem~\ref{T:main} that the choice $f_3=\im H$ can be changed
to $f_3=\re H$. It is clear also that if one is interested in reduction and fixation of
\emph{non-degenerate} system of points (in the sense that $O,P_1,\ldots,P_n$ are
not contained in a plane), then the conditions about no lines and no real points on $X_F$
can be dropped, and a condition only needed is that $X_F$ has no conics. In particular,
there is no need for $X_F$ to be of even degree. Furthermore, also the condition about
the conics can be somewhat weakened. A closer look at the projective conic defined by
parametric equations~\eqref{E:su2tocn} reveals that it has no real points. Thus, the
sufficient condition on $F$ is that $X_F$ has no conics \emph{with real points}.
\end{remark}

\begin{example}
A system $\{P_1\}$ with one point can be reduced (by $SO(3)$) and fixed by only two equations
$x_1=y_1=0$. Let $n=2$. A system with two points is always degenerate. The conditions
about lines and conics in Theorem~\ref{T:main} become empty, so we can set
$F=w_{1}^{2}+w_{2}^{2}$, $G=w_{1}^{2}+w_2$. But a much simpler method not
relying on Theorem~\ref{T:main} is to impose the equations $z_1=z_2=0$, thus
embedding the system to the coordinate plane $Oxy$, and add one more
equation $(x_1 y_1)^3+x_2 y_2=0$ with Astrelin function, reducing to the plane case.
\end{example}

\begin{example}\label{Ex:n34}
The first non-trivial case is that of $n=3$ points. The hypersurface $X_F$ in this case is
a curve, so it suffices to ensure that it is not a conic, is irreducible and has no real points. The choice
$$F=w_{1}^{4}+w_{2}^{4}+w_{3}^{4}$$
defining the Fermat quartic and
$$H=w_{1}^{3}+w_{2}^{2}+w_1$$
works well. We can deal with the case of $n=4$ points if the group $G$ is extended to the full affine
group of isometries of $\R^3$. First we impose the conditions \eqref{E:cofm}
fixing the center of mass. Then, only three of the points remain independent,
and the situation essentially reduces to the case $n=3$. We again can choose
$$F=w_{1}^{4}+w_{2}^{4}+w_{3}^{4}+w_{4}^{4}$$
to be the Fermat quartic. A straightforward verification shows that the section of
$X_F\subset \C\P^4$ by the plane
$$w_1+w_2+w_3+w_4=0$$
is a non-singular (and thus irreducible) plane quartic with no real points. Thus this $F$
and, say,
$$H=w_{1}^{4}+w_{2}^{3}+w_{3}^{2}+w_4$$
provide reduction and fixation of a system with $4$ points with respect to the full group of affine
isometries of $\R^3$. 
\end{example}

\begin{example}
It is still possible to write explicit function $F$ for reduction and fixation of a system of $4$
points with respect to the group $SO(3)$ only. In \cite[Theorem~3.1]{vanLuijk}, van Luijk describes
a family of quartics in $\C\P^3$ defined over $\Q$ and of Picard number $1$. The last condition
means, in particular, that each of the surfaces has no lines and no conics. The equation of
such a quartic $X_h$ is
\begin{equation}\label{E:vanLuijk}
w_4 f_1+2w_3f_2=3g_1 g_2+6h,
\end{equation}
where $w_1,\ldots,w_4$ are the homogeneous coordinates on $\C\P^3$, $f_1$, $f_2$, $g_1$, $g_2$
are some explicitly given homogeneous polynomials whose precise form is not important here, $f_i$
of degree $3$, $g_j$ of degree $2$, and $h$ is \emph{any} homogeneous polynomial of degree $4$.
It follows that we can choose $h$ to be the Fermat sum of $4$-th degrees of $w_1,\ldots,w_4$
taken with a very big integral coefficient so that the resulting quartic $X_h$ has no real points. Thus
the function $F$ defining this quartic and $H$ from Example~\ref{Ex:n34} provide reduction and
fixation of any system of $4$ points in space. In fact, a version of the trick from Example~\ref{Ex:n34}
applied to $X_h$ allows to deal also with systems of $5$ points and with the group $G$ the full group of 
affine isometries. For this, we take
$$F(w_1,\dots,w_5)=w_4 f_1+2w_3f_2-3g_1 g_2-6h$$
with
$h=-N(w_{1}^{4}+\dots+w_{4}^4+w_{5}^{4})$.
In view of the equations fixing the center of mass, the problem amounts to study of the quartic
in $\C\P^3$ with the equation
$$w_4 f_1+2w_3f_2-3g_1 g_2+6N(w_{1}^{4}+\dots+w_{4}^4+(w_1+\dots+w_4)^4)=0.$$
It has the same form as van Luijk's equation \eqref{E:vanLuijk}, thus defines a quartic with no
lines and conics, and for sufficiently big $N$ it has no real points.
\end{example}

\begin{remark}
In fact, Lemma~\ref{L:zerosofim} gives a method for constructing functions providing
rotational reduction and fixation of systems of points in the plane different from Astrelin's function $A$.
Indeed, let us represent each point $P_j=(x_j,y_j)$ of a system $\{P_j\}_{j=1}^{n}\subset \R^2$
by the complex number $w_j=x_j+iy_j$. Let $H$ be any polynomial on $\C^n$ such that
$H(0,\ldots,0)=0$ but $H$ does not identically vanish on any of the $1$-dimensional vector
subspaces of $\C^n$. An example of such $H$ is given in Theorem~\ref{T:main}. Then each
system $\{P_j\}_{j=1}^{n}$ in the plane admits a reduction by $SO(2)$ with respect to the
function $\im H$ and is fixed by this function with respect to $SO(2)$.
\end{remark}

\begin{example}
Let $\{P_j\}_{j=1}^{n}$ be the set of vertices of a regular polygon in $\R^2$ ($\C$), and
the initial positions of the points $P_j$ are the $n$th complex roots of unity:
$$P_j=e^{2pi(j-1)/n}, \quad j=1,\dots,n.$$
Let us choose the function $H$ this time to be
$$H=w_{1}^{n}+w_{2}^{2n}+\dots+w_{n}^{n^2}.$$
The action of $SO(2)$ can now be represented as multiplication of each $w_j$ by a complex number
$e^{\varphi i}$. The reader can easily calculate that the restriction of $\im H$ to the $SO(2)$-orbit of 
the system $\{P_j\}_{j=1}^{n}$ is
$$\im\left(\sum_{j=1}^{n} \big(e^{2\pi i(j-1)/n} e^{i\varphi}\big)^{jn}\right)=
\im\left(\sum e^{ijn\varphi}\right)=\sum_{j=1}^{n} \sin j\psi,$$
where $\psi=n\varphi$. The last sum is $0$ if $\psi=2\pi k$, and, if $\psi\ne 2\pi k$, it can be expressed 
as
$$\frac{\sin\frac{n\psi}{2}\sin\frac{(n+1)\psi}{2}}{\sin\frac{\psi}{2}}.$$
It follows that the regular polygon is fixed by $\im H$ at its initial position and at positions
that differ from the initial one by a rotation by an angle $\varphi=\frac{2\pi k}{n^2}$, $k=1,\ldots,n^2-1$,
or $\varphi=\frac{2\pi k}{n(n+1)}$, $k=1,\ldots,n^2+n-1$.
\end{example}

The fact that the Astrelin type function $H$ is not homogeneous has a consequence that the rotation
providing reduction is not invariant under scaling, i.e., multiplication of all the coordinates of all the points
of a system by the same number. This should seem very unnatural from physicist's point of view.
As our last remark we shall show that it is possible to replace $H$ by a homogeneous function,
but at the price of making the construction even less explicit.

\begin{theorem}
Let $F$ be a complex homogeneous polynomial satisfying the conditions of Theorem~\ref{T:main},
and $g(x_1,y_1,\ldots,x_n,y_n)$ be a \emph{real} homogeneous polynomial of \emph{odd} degree
such that its complexification $g_{\C}\colon \C^{2n}\to \C$ defines a hypersurface
$X_{g_{\C}}\subset \C\P^{2n-1}$ which \emph{has no lines}. Then each system of $n$ points in
$\R^3$ admits a reduction by $SO(3)$ with respect to the $3$ functions  
$$f_1=\re F, \: f_2=\im F, \: f_3=g,$$
considered as functions on $\R^{3n}$, and at the same time is fixed by $f_1$, $f_2$, $f_3$ with
respect to $SO(3)$.
\end{theorem}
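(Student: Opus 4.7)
The plan is to follow the structure of the proof of Theorem~\ref{T:main} for the functions $f_1 = \re F$ and $f_2 = \im F$, and then replace the role that Lemma~\ref{L:zerosofim} played there for $\im H$ by a more elementary argument exploiting the oddness of the degree of $g$. In Case I of the earlier proof (all points on a line through $O$), the argument carries over unchanged: $F = 0$ on the orbit already forces the points onto the $Oz$ axis, so fixation is immediate, and rotating to this position reduces every $(x_j, y_j)$ to zero, which makes $g$ vanish by homogeneity, yielding reduction.

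In Cases II and III, I would proceed as follows. The polynomial $\tilde F \in \C[\alpha, \beta]$ is homogeneous of degree $4d$, hence factors into $4d$ complex lines through the origin. For each such line $\ell$, I parametrize $\ell \cap S^3$ by $\alpha = ct$, $\beta = dt$ with $|t| = 1$, and set $t = e^{i\theta/2}$. The projections then become $w_j(\theta) = A_j e^{i\theta}$ with $A_j = c_j c^2 - \bar c_j d^2 - 2 z_j^\circ c d$, so that $x_j(\theta)$ and $y_j(\theta)$ are real trigonometric polynomials of degree $1$ satisfying $x_j(\theta + \pi) = -x_j(\theta)$ and $y_j(\theta + \pi) = -y_j(\theta)$. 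Consequently, $\tilde g(\theta)$ is a real trigonometric polynomial of degree $k := \deg g$ and, because $k$ is odd, satisfies $\tilde g(\theta + \pi) = -\tilde g(\theta)$. The intermediate value theorem then yields a zero of $\tilde g$ on $[0, \pi]$, establishing reduction; and since a nonzero real trigonometric polynomial of degree $k$ has at most $2k$ zeros on $[0, 2\pi)$, fixation will follow once $\tilde g \equiv 0$ has been ruled out.

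The main step will be precisely this non-vanishing: assume for contradiction that $\tilde g \equiv 0$ on some circle $\ell \cap S^3$. Then $g(\re(A_1 u), \im(A_1 u), \ldots, \re(A_n u), \im(A_n u)) = 0$ for all $|u| = 1$, but this expression is a real polynomial homogeneous of degree $k$ in $(\re u, \im u)$, so its vanishing on the unit circle forces its vanishing on all of $\R^2$. Hence $g$ vanishes identically on the real $2$-dimensional subspace $L \subset \R^{2n}$ spanned over $\R$ by the vectors corresponding to $(A_1, \ldots, A_n)$ and $(iA_1, \ldots, iA_n)$ under the identification $\C^n \cong \R^{2n}$. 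Complexifying this identity, $g_\C$ vanishes on the complex span $L_\C \subset \C^{2n}$; projectivizing, $X_{g_\C}$ contains the projective line $\P(L_\C)$, contradicting the hypothesis. The hard part will be double-checking that $L$ is genuinely $2$-dimensional, i.e., that the $A_j$ do not all vanish; but this follows from the same dimension argument used in Lemma~\ref{L:not0}, since vanishing of all $A_j$ on $\ell \cap S^3$ would correspond to rotations sending every point $P_j$ to the $Oz$ axis, which is impossible in Cases II and III.
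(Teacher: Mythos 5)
Your proposal is correct and follows essentially the same route as the paper's own proof: you run the argument of Theorem~\ref{T:main} up to the decomposition of the zero set of $\tilde{F}$ into complex lines, and then replace Lemma~\ref{L:zerosofim} by the observation that $g$ restricted to the corresponding real $2$-plane $L\subset\R^{2n}$ is a homogeneous polynomial of odd degree which is not identically zero (since otherwise $g_{\C}$ would vanish on $L_{\C}$ and $X_{g_{\C}}$ would contain a line), so its zero set on the unit circle is nonempty and finite. Your explicit trigonometric-polynomial bookkeeping and the check that $L$ is genuinely $2$-dimensional are details the paper leaves implicit, but they do not alter the approach.
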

\begin{proof}
The proof starts with the same argument as the proof of Theorem~\ref{T:main} till we highlight
several complex lines in $\C^n$, and the question reduces to proving that the restriction
of $g$ to each of these lines (considered now as real planes in $\R^{2n}$) has a finite number
of zeros on the unit circle. But indeed, the condition that we imposed on $g$ ensures that $g$
is not identically zero on any such plane $L$. On the other hand, the restriction $g|_L$ is
a homogeneous function of odd degree. Thus $g|_L$ vanishes along several real lines, and these
lines intersect the unit circle in a finite set of points.
\end{proof}


\begin{thebibliography}{11}
\bibitem{Clemens} H. Clemens, \emph{Curves on generic hypersurfaces}, Ann. scient. {\'E}c. Norm. Sup.,
$4^e$ s{\'e}rie \textbf{19}(4) (1986), 629--636.
\bibitem{Connelly} R. Connelly, \emph{The Rigidity of Certain Cabled Frameworks and the Second-Order
Rigidity of Arbitrarily Triangulated Convex Surfaces}, Advances in Math. \textbf{37} (1980), 272--299.
\bibitem{Furukawa} K. Furukawa, \emph{Rational curves on hypersurfaces}, J. Reine Angew.
Math. \textbf{665} (2012), 157--188.
\bibitem{Gromov} M. Gromov, Partial differential relations, Ergeb. der Math. 3. Folge, Springer Verlag, 1986. 
\bibitem{Kostrikin} A.\,I. Kostrikin, Introduction to Algebra, Univesitext series, Springer Verlag, 1982.
\bibitem{vanLuijk} R. van Luijk, \emph{K3 surfaces with Picard number one and infinitely many
rational points}, Algebra Number Theory \textbf{1}(1) (2007), 1--15.
\bibitem{Markushevich} A.\,I. Markushevich, Short-course theory of analytic functions (in Russian),
Mir, 2009.
\bibitem{Montgomery} R. Montgomery, \emph{The Three-body Problem and the Shape Sphere},
American Math. Monthly \textbf{122}(4) (2015), 299--321.
\bibitem{Sabitov98a} I.~Kh. Sabitov, \emph{Generalized Heron-Tartaglia's formula and some of
its consequences} (in Russian), Matem. Sbornik \textbf{189}(10) (1998), 105--134 (English translation in Sbornik Mathematica. London Math. Soc. - 189:10.-  p. 1533-1561).
\bibitem{Sabitov98b} I.~Kh. Sabitov, \emph{The volume as a metric invariant of polyhedra}, Discrete and Computatonal Geometry \textbf{20}(4) (1998), 405--425.
\bibitem{Sabitov11} I.~Kh. Sabitov, \emph{Algebraic methods for solution of polyhedra},
Russian Math. Surveys \textbf{66}:(3) (2011), 445--505.
\end{thebibliography}
\end{document}